\newtheorem{theorem}{Theorem}[section]
\newtheorem{lemma}[theorem]{Lemma}
\theoremstyle{definition}
\theoremstyle{remark}
\newtheorem{remark}[theorem]{Remark}
\numberwithin{equation}{section}
\newcommand{\rr}{{\mathbb R}}
\newcommand{\nat}{{\mathbb N}}
\newcommand{\ganz}{{\mathbb Z}}
\newcommand{\complex}{{\mathbb C}}
\newcommand{\Exp}{{\mathbb E}}
\newcommand{\sign}{\operatorname{sign}}
\newcommand{\Res}{\operatorname{Res}}
\newcommand{\Rea}{\operatorname{Re}}
\newcommand{\eqd}{\stackrel{\rm d}{=}}
\begin{document}

\sloppy
\title[Space-time duality for semi-fractional diffusions]{Space-time duality for semi-fractional diffusions} 
\author{Peter Kern}
\address{Peter Kern, Mathematical Institute, Heinrich-Heine-University D\"usseldorf, Universit\"atsstr. 1, D-40225 D\"usseldorf, Germany}
\email{kern\@@{}hhu.de}

\author{Svenja Lage}
\address{Svenja Lage, Mathematical Institute, Heinrich-Heine-University D\"usseldorf, Universit\"atsstr. 1, D-40225 D\"usseldorf, Germany}
\email{Svenja.Lage@uni-duesseldorf.de} 

\date{\today}

\begin{abstract}
Almost sixty years ago Zolotarev proved a duality result which relates an $\alpha$-stable density for $\alpha\in(1,2)$ to the density of a $\frac1{\alpha}$-stable distribution on the positive real line. In recent years Zolotarev duality was the key to show space-time duality for fractional diffusions stating that certain heat-type fractional equations with a fractional derivative of order $\alpha$ in space are equivalent to corresponding time-fractional differential equations of order $\frac1{\alpha}$. We review on this space-time duality and take it as a recipe for a generalization from the stable to the semistable situation.
\end{abstract}

\keywords{Zolotarev duality, fractional diffusion, semi-fractional derivative, semi\-stable L\'evy process, subordinator, hitting-time}
\subjclass[2010]{Primary 35R11, 60E10; Secondary 26A33, 60G18, 60G22, 60G51, 82C31.}

\maketitle

\baselineskip=18pt

\section{Introduction}\label{sec:1}

Let $X=(X_{t})_{t\geq0}$ be a L\'evy process on $\rr$, i.e.\ a stochastically continuous process starting in $X_{0}=0$ with stationary and independent increments. Assuming that the process is strictly self-similar in the statistical sense that
\begin{equation}\label{selfsim}
(X_{ct})_{t\geq0}\eqd (c^{1/\alpha}X_{t})_{t\geq0}\quad\text{ for all }c>0,
\end{equation}
where $\eqd$ denotes equality of all finite-dimensional marginal distributions of the processes, it is necessarily a stable L\'evy process with parameter $\alpha\in(0,2]$. We will exclude the trivial degenerate case, as well as the cases $\alpha=1$ (Cauchy process) and $\alpha=2$ (Brownian motion), since they are often exceptional. The stable L\'evy process is best characterized by its Fourier transform (FT) in terms of the L\'evy-Khintchine formula $\Exp[e^{ikX_{t}}]=\exp(t\psi(k))$
with log-characteristic function
\begin{equation}\label{logchar}
\psi(k)=i\mu k+\int_{\rr\setminus\{0\}}\left(e^{ikx}-1-\frac{ikx}{1+x^{2}}\right)\,d\phi(x)
\end{equation}
for some unique drift parameter $\mu\in\rr$ and a unique L\'evy measure 
\begin{equation}\label{Lmeas}
d\phi(x)=D\left(p\cdot x^{-\alpha-1}1_{\{x>0\}}+q\cdot |x|^{-\alpha-1}1_{\{x<0\}}\right)\,dx,
\end{equation}
where $D>0$ and $p,q\geq0$ with $p+q=1$. It is well-known that the process has smooth densities $x\mapsto p(x,t)$, i.e.\ they are $C^\infty(\rr)$-functions such that a density itself and all its derivatives belong to $C_0(\rr)\cap L^1(\rr)$. According to \cite{SamTaq}, the FT $\widehat{p}(k,t)=\int_{\rr}e^{ikx}p(x,t)\,dx=\exp(t\psi(k))$ can be parametrized as
\begin{equation}\label{densparam}
\widehat{g}(k;\alpha,\beta,\sigma,v):=\widehat{p}(k,1)=\exp\left(iv k-\sigma^{\alpha}|k|^{\alpha}\left(1-i\beta\sign(k)\tan(\alpha\tfrac{\pi}{2})\right)\right),
\end{equation}
where $\beta=p-q\in[-1,1]$ is a skewness parameter, $\sigma=(D\cdot |\cos(\alpha\tfrac{\pi}{2})|)^{{1/\alpha}}>0$ is a scale parameter, and $v=\mu-\int_{\rr\setminus\{0\}}(\frac{x}{1+x^2}-x\,1_{\{\alpha>1\}})d\phi(x)$ is a centering parameter. In particular, the strict self-similarity \eqref{selfsim} holds iff $v=0$. Moreover, since $X$ is a L\'evy process, for suitable functions $f$ the operators $T_tf(x)=\Exp[f(x-X_t)]$, $t\geq0$, determine a $C_0$-semigroup with generator
\begin{equation}\label{generator}
Lf(x)=-\mu f'(x)+\int_{\rr\setminus\{0\}}\left(f(x-y)-f(x)+\frac{y\,f'(x)}{1+y^2}\right)\,d\phi(x)
\end{equation}
and $\widehat{Lf}(k)=\psi(k)\cdot \widehat{f}(k)$ for $k\in\rr$; e.g., see \cite{SamTaq,Sato} for details.

We will further consider L\'evy processes with a discrete scaling property such that \eqref{selfsim} does only hold for some $c>1$ and thus for all integer powers of $c$, but not necessarily for all $c>0$. These processes are called semistable L\'evy processes and are determined by log-periodic perturbations of the tails  of the L\'evy measure, i.e.\ instead of \eqref{Lmeas} we have for all $x>0$
\begin{equation}\label{semiLmeas}
\phi(x,\infty)=x^{-\alpha}\theta_{+}(\log x)\quad\text{ and }\quad \phi(-\infty,-x)=x^{-\alpha}\theta_{-}(\log x),
\end{equation}
where $\theta_{\pm}$ are non-negative, $\log(c^{1/\alpha})$-periodic functions such that $x\mapsto x^{-\alpha}\theta_{\pm}(\log x)$ are non-increasing, which we call admissable. For details on semistable distributions and L\'evy processes we refer to the monographs \cite{MMMHPS,Sato}. Log-periodic disturbances of power law behavior frequently appears in a variety of physical applications \cite{Sor, ZSP} and also in finance \cite{Sorbook}. In recent years the fractal path behavior of semistable L\'evy processes has been investigated, complementing previous classical results for their stable counterparts. It turned out that in terms of fractal dimension (mainly Hausdorff dimension) the range, the graph and multiple points of the sample paths almost surely are not affected by the log-periodic perturbations \cite{KMX,KerWed,LuksXiao,Wed}, even in terms of exact Hausdorff measure \cite{KerWed2}. Nevertheless, semistable L\'evy processes show a different behavior when turning from fractality to fractionality. When speaking about fractionality, we refer to the well-known result that densities of a stable L\'evy process solve a heat-type partial differential equation (pde) with a fractional derivative operator in space, called the fractional diffusion equation. For details on fractional calculus we refer to the monographs \cite{KST,SKM}.

In Section 2 we will review on this fractional pde approach and a remarkable connection to Zolotarev duality. In the special case of a negatively skewed stable L\'evy process with $\alpha\in(1,2)$, the fractional diffusion equation is known to be equivalent to a time-fractional pde with an ordinary first-order derivative in space, which is called space-time duality \cite{BMN,KelMMM}. This perfectly reflects Zolotarev duality for the related stable densities. From a physical point of view this space-time duality has an important impact. Since fractional derivatives are non-local operators, the fractional diffusion equation lacks of a meaningful physical interpretation. As mentioned by Hilfer \cite{Hil}, due to non-locality in space, experimentally a closed system cannot be separated from its outer environment, whereas non-locality in time does not violate physical principles if one accepts long memory effects.

In Section 3 we ask for a corresponding result concerning the more general class of semistable L\'evy processes. Recently, semi-fractional derivatives have been introduced in \cite{KLM} such that densities of semistable L\'evy processes solve corresponding semi-fractional diffusion equations. This new class of fractional derivatives can be seen as a special case of so-called general fractional derivatives as in \cite{Koc,KKdS}. The approach allows to develop a dual equation with a semi-fractional derivative in time in which the log-periodic disturbances cause an additional inhomogeneity and thus showing a significantly different behavior compared to their stable counterpart. Finally, proofs of our main results are given in Section 4.

\section{Fractional Diffusions and Zolotarev Duality}\label{sec:2}

In this section we follow the arguments laid out in \cite{KelMMM,MMMSik} to derive the probabilistic solution to certain fractional diffusion equations by stable densities, and the approach in \cite{KelMMM} to space-time duality in the negatively skewed case. This is best suitable for our desired generalization towards the semistable setting in Section 3. Consider a fractional diffusion equation of the form 
\begin{equation}\label{fracDE}
\frac{\partial}{\partial t}p(x,t)=-v\frac{\partial}{\partial x}p(x,t)+D\left(\frac{1+\beta}{2}\frac{\partial^\alpha}{\partial x^\alpha}p(x,t)+\frac{1-\beta}{2}\frac{\partial^\alpha}{\partial (-x)^\alpha}p(x,t)\right),
\end{equation}
where $D>0$ if $\alpha\in(1,2)$,  $D<0$ if $\alpha\in(0,1)$, $v\in\rr$ is a velocity parameter, and $\beta\in[-1,1]$ is the skewness parameter. Here $\frac{\partial^\alpha}{\partial x^\alpha}f(x)$ and $\frac{\partial^\alpha}{\partial (-x)^\alpha}f(x)$ denote the positive and negative Riemann-Liouville fractional derivatives defined for suitable functions $f$ as the unique functions with FT $(-ik)^\alpha\widehat{f}(k)$, respectively $(ik)^\alpha\widehat{f}(k)$. For integers $\alpha\in\nat$ this FT coincides with $\int_\rr e^{\pm ikx}f^{(\alpha)}(x)\,dx=\widehat{f^{(\alpha)}}(\pm k)$ and thus fractional derivatives generalize integer order derivatives. Turning to the FT on both sides of \eqref{fracDE} yields
\begin{equation}\label{FTfracDE}\begin{split}
\frac{\partial}{\partial t}\widehat{p}(k,t) & =v\,ik\widehat{p}(k,t)+D\left(\frac{1+\beta}{2}(-ik)^\alpha+\frac{1-\beta}{2}(ik)^\alpha\right)\widehat{p}(k,t)\\
& =v\,ik\widehat{p}(k,t)-\sigma^\alpha |k|^\alpha\left(1-i\beta\sign(k)\tan(\alpha\tfrac{\pi}{2})\right)\widehat{p}(k,t),
\end{split}\end{equation}
where the last equality follows after a short calculation (see equations (5.5) and (5.6) in \cite{MMMSik} for details) with the scale parameter $\sigma=(-D\cos(\alpha\frac{\pi}{2}))^{1/\alpha}>0$. With the initial conditions $\widehat{p}(0,t)=1$ for a probability density, and $\widehat{p}(k,0)=1$ corresponding to the point source $p(x,0)=\delta(x)$, using \eqref{densparam} the unique solution to the ode \eqref{FTfracDE} is given by $\widehat{p}(k,t)=\widehat{g}(k;\alpha,\beta,\sigma t^{1/\alpha}, vt)$, showing that the stable densities $p(x,t)=g(x;\alpha,\beta,\sigma t^{1/\alpha}, vt)$ solve \eqref{fracDE}.

We now restrict our considerations to the negatively skewed case $\beta=-1$ with $\alpha\in(1,2)$, $v=0$ and $D=1$. The corresponding fractional diffusion equation
\begin{equation}\label{fracDEns}
\frac{\partial}{\partial t}p(x,t)=\frac{\partial^\alpha}{\partial (-x)^\alpha} p(x,t)
\end{equation}
is solved by the stable densities
\begin{equation}\label{solsfracDE}
p(x,t)=g\left(x;\alpha,-1,\left(\left|\cos\left(\alpha\tfrac{\pi}{2}\right)\right| t\right)^{1/\alpha},0\right).
\end{equation}
Applying the Fourier-Laplace transform (FLT) $\bar{p}(k,s)=\int_{0}^\infty\int_{\rr}e^{-st+ikx}p(x,t)\,dx\,dt$ to both sides of \eqref{fracDEns} yields $s\,\bar{p}(k,s)-1=(ik)^{\alpha}\bar{p}(k,s)$ for the point source fulfilling $\widehat{p}(k,0)=1$ with solution
\begin{equation}\label{FLTsol}
\bar{p}(k,s)=\frac{1}{s-(ik)^{\alpha}}=\frac{1}{s-\psi(k)},
\end{equation}
where $\psi$ is as in \eqref{logchar} for the L\'evy measure $\phi$ concentrated on the negative axis with $\phi(-\infty,-x)=x^{-\alpha}\frac{\alpha-1}{\Gamma(2-\alpha)}$ and $\mu=\int_{-\infty}^{0}(\frac{x}{1+x^{2}}-x)\,d\phi(x)$. Note that $\bar{p}$ has a single pole at $k=-i\,s^{1/\alpha}$. Inverting the FT by the help of Cauchy's residue theorem (details are given in Section 4), for $x>0$ this leads to
\begin{equation}\label{LTsol}
\widetilde{p}(x,s)=\frac{1}{\alpha}\,s^{-1+1/{\alpha}}\exp\left(-x\,s^{1/\alpha}\right)=\frac{1}{\alpha}\,\widetilde{h}(x,s)
\end{equation}
for the Laplace transform (LT) $\widetilde{p}(x,s)=\int_{0}^\infty e^{-st}p(x,t)\,dt$ as shown in \cite{KelMMM}, where $\widetilde{h}$ is the LT of the inverse $\frac{1}{\alpha}$-stable subordinator with $\frac{1}{\alpha}\in(\frac12,1)$ and density
\begin{equation}\label{solsubord}
h(x,t)=\alpha t\,x^{-1-\alpha}g\left(t\,x^{-\alpha};\tfrac{1}{\alpha},1,\left|\cos\left(\tfrac1{\alpha}\tfrac{\pi}{2}\right)\right| ^{\alpha},0\right)
\end{equation}
for $x>0$; see \cite{ctrw} or equation (4.47) in \cite{MMMSik}. Combining \eqref{solsfracDE}, \eqref{LTsol} and \eqref{solsubord} directly leads to Zolotarev's duality result relating negatively skewed $\alpha$-stable densities for $\alpha\in(1,2)$ with positively skewed $\frac{1}{\alpha}$-stable densities:
\begin{theorem}[\cite{Zol}, Theorem 1]\label{2.1}
For $\alpha\in(1,2)$ and stable densities $g$ parametrized as in \eqref{densparam} we have for all $x>0$ and $t>0$
$$g\left(x;\alpha,-1,\left(\left|\cos\left(\alpha\tfrac{\pi}{2}\right)\right| t\right)^{1/\alpha},0\right)=t\,x^{-1-\alpha}g\left(t\,x^{-\alpha};\tfrac{1}{\alpha},1,\left|\cos\left(\tfrac1{\alpha}\tfrac{\pi}{2}\right)\right| ^{\alpha},0\right).$$
\end{theorem}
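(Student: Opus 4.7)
The plan is to assemble the theorem from three ingredients already marshalled in the preceding exposition, the genuine technical step being the Fourier inversion producing \eqref{LTsol}. Once \eqref{LTsol} is justified, the identity of densities follows by uniqueness of the Laplace transform in the time variable, and the theorem reduces to rewriting the factor $1/\alpha$.

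First I would set $p(x,t):=g\bigl(x;\alpha,-1,(|\cos(\alpha\pi/2)|t)^{1/\alpha},0\bigr)$. By the computation leading from \eqref{fracDE} to \eqref{densparam}, this $p$ solves the fractional diffusion equation \eqref{fracDEns} with point-source initial condition $p(x,0)=\delta(x)$. Applying the Fourier-Laplace transform turns \eqref{fracDEns} into $s\bar p(k,s)-1=(ik)^\alpha\bar p(k,s)$ and hence $\bar p(k,s)=1/(s-(ik)^\alpha)$, which is \eqref{FLTsol}. Thus $\widetilde p(x,s)=\frac{1}{2\pi}\int_\rr e^{-ikx}\bar p(k,s)\,dk$.

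Next I would invert this Fourier integral for $x>0$ by Cauchy's residue theorem. Choose the principal branch of $z\mapsto z^\alpha$ with cut along the negative real axis, so that the cut of $(ik)^\alpha$ runs up the positive imaginary axis and does not obstruct a contour closed in the lower half-plane. The only singularity of $\bar p(\cdot,s)$ in that half-plane is the simple pole at $k_0=-is^{1/\alpha}$, where $ik_0=s^{1/\alpha}$ and so the derivative $\partial_k(s-(ik)^\alpha)\big|_{k_0}=-i\alpha\,(ik_0)^{\alpha-1}=-i\alpha\,s^{1-1/\alpha}$. Jordan's lemma controls the semicircular arc since $\alpha>0$. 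The clockwise orientation contributes a factor $-2\pi i$, and the residue computation gives
\begin{equation*}
\widetilde p(x,s)=\frac{-2\pi i}{2\pi}\cdot\frac{e^{-ik_0x}}{-i\alpha\,s^{1-1/\alpha}}=\frac{1}{\alpha}\,s^{-1+1/\alpha}\exp(-x\,s^{1/\alpha}),
\end{equation*}
which is exactly \eqref{LTsol}. This step is the one I expect to be the main obstacle, as one must be careful with the branch of $(ik)^\alpha$ and verify the vanishing of the large-arc contribution; the rest is bookkeeping.

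To finish, I recall that the inverse $\tfrac{1}{\alpha}$-stable subordinator density \eqref{solsubord} has Laplace transform $\widetilde h(x,s)=s^{-1+1/\alpha}\exp(-xs^{1/\alpha})$ (this is the standard identity cited from \cite{ctrw,MMMSik}). Hence \eqref{LTsol} reads $\widetilde p(x,s)=\tfrac{1}{\alpha}\widetilde h(x,s)$, and uniqueness of the Laplace transform in $t$ (both sides are continuous in $t$ and integrable) yields $p(x,t)=\tfrac{1}{\alpha}h(x,t)$ for every $x>0$, $t>0$. Substituting the explicit formulae \eqref{solsfracDE} and \eqref{solsubord} and cancelling the prefactor $\tfrac{1}{\alpha}\cdot\alpha=1$ gives precisely the identity of Theorem \ref{2.1}.
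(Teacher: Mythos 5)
Your proposal is correct and follows essentially the same route as the paper: apply the Fourier--Laplace transform to the point-source solution of \eqref{fracDEns}, invert the Fourier transform by closing a contour in the lower half-plane around the simple pole at $k=-is^{1/\alpha}$ to obtain \eqref{LTsol}, identify the result as $\tfrac1\alpha$ times the Laplace transform of the inverse $\tfrac1\alpha$-stable subordinator density \eqref{solsubord}, and conclude by uniqueness of the Laplace transform. The residue and arc-vanishing details you supply are exactly those the paper defers to the (more general, semistable) computation in Section 4.
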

Note that Zolotarev uses a different parametrization which can be transferred to the above parametrization \eqref{densparam} as described in \cite{BMN}. Zolotarev proved this result in \cite{Zol} by transforming the FT of the $\alpha$-stable density using complex contour integrals; cf. also Theorem 2.3.1 in \cite{Zolbook}. Lukacs \cite[Theorem 3.3]{Luk} gave a different proof using a series representation of stable densities independently obtained by Bergstr\"om \cite{Ber} and Feller \cite{Fel}. In this work of Feller the $\alpha$-stable density is also shown to be a solution to a fractional diffusion equation with a fractional integral operator of negative order $-\alpha$. It is worth to mention that Zolotarev duality also holds for arbitrary values of the skewness parameter $\beta$, but then the following interpretation as a solution of a time-fractional pde fails. Zolotarev's result further holds for $\alpha=2$ which leads to a closed form expression of a positively skewed $\frac12$-stable density, the only closed form expression known besides the Gaussian and the Cauchy density. This density is frequently called L\'evy density due to its appearance in \cite{Levy}, but according to section 3.7 in \cite{Das} it was already observed by Heavyside in 1871. The fractional pde connection for the case $\alpha=2$ can be found in \cite{BMN2}.

Coming back to duality, we now want to show that \eqref{solsubord} is related to a time-fractional pde. Therefore, applying FT for $x>0$ to \eqref{LTsol} yields $\bar{h}(k,s)=\frac{s^{-1+1/\alpha}}{s^{1/\alpha}-ik}$ which leads to the equation
$$s^{1/\alpha}\bar{h}(k,s)-s^{-1+1/\alpha}=ik\,\bar{h}(k,s).$$
Inverting the FT on both sides gives
\begin{equation}\label{LTtimefrac}
s^{1/\alpha}\widetilde{h}(x,s)-s^{-1+1/\alpha}\delta(x)=-\frac{\partial}{\partial x}\widetilde{h}(x,s).
\end{equation}
For suitable functions $f$ and $t\geq0$ denote by $(\frac{\partial}{\partial t})^{\gamma}f(t)$ the Caputo fractional derivative of order $\gamma\in(0,1)$ which is the unique function with LT $s^{\gamma}\widetilde{f}(s)-s^{\gamma-1}f(0)$, whereas the Riemann-Liouville fractional derivative $\frac{\partial^\gamma}{\partial t^\gamma}$ of order $\gamma\in(0,1)$ is the unique function with LT $s^{\gamma}\widetilde{f}(s)$. Then Laplace inversion on both sides of \eqref{LTtimefrac} yields
\begin{equation}\label{timefrac}
\left(\frac{\partial}{\partial t}\right)^{1/\alpha}h(x,t)=-\frac{\partial}{\partial x}h(x,t)
\end{equation}
for $x>0$ and $t>0$. Since $p(x,t)=\alpha^{-1}h(x,t)$ by \eqref{LTsol}, the original $\alpha$-stable density $p$ also solves the time-fractional pde \eqref{timefrac} under point source initial condition $p(x,0)=\delta(x)$ leading directly to space-time duality for fractional diffusions:
\begin{theorem}[\cite{BMN,KelMMM}]\label{2.2}
For $x>0$ and $t>0$ the point source solutions of the fractional diffusion equation \eqref{fracDEns} of order $\alpha\in(1,2)$ and of the time-fractional pde \eqref{timefrac} of order $\frac1{\alpha}\in(\frac12,1)$ are equivalent.
\end{theorem}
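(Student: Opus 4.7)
The plan is to make rigorous the chain of identities already laid out in the paragraphs leading up to the theorem. Concretely, I would verify (i) that the stable density \eqref{solsfracDE} solves \eqref{fracDEns}, by Fourier-transforming both sides in $x$, solving the resulting ODE in $t$ under $\widehat p(k,0)=1$, and matching the solution with \eqref{densparam}; (ii) that its Laplace transform in $t$ satisfies \eqref{LTsol}, which is the main technical step; (iii) that comparing with \eqref{solsubord} identifies $p(x,t) = h(x,t)/\alpha$ on $(0,\infty)^2$; and (iv) that $h$, and therefore $p$, solves \eqref{timefrac} by reversing the FT--LT chain.

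The heart of the proof is the Fourier inversion producing \eqref{LTsol}. Fixing $s>0$, I would write
\begin{equation*}
\widetilde p(x,s) = \frac{1}{2\pi}\int_\rr \frac{e^{-ikx}}{s-(ik)^\alpha}\,dk
\end{equation*}
using the principal branch of $z \mapsto z^\alpha$, so that the integrand is meromorphic on $\complex$ cut along the positive imaginary axis, with a unique simple pole at $k_0 = -is^{1/\alpha}$ in the lower half-plane (since $(i k_0)^\alpha = s$). For $x>0$ the factor $e^{-ikx}$ decays as $\Ima k \to -\infty$, so I would close the contour with a large semicircle below, observe that the branch cut lies entirely in the upper half-plane and hence contributes nothing, and apply Cauchy's residue theorem. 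The main obstacle is checking that the arc contribution vanishes in the limit; this will use $|s-(ik)^\alpha|^{-1} = O(|k|^{-\alpha})$ for large $|k|$ together with $\alpha>1$, so that the arc integral is $O(R^{1-\alpha})$. A direct residue computation at $k_0$ then yields \eqref{LTsol}.

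With \eqref{LTsol} in hand the remainder is algebraic. The explicit density \eqref{solsubord} of the inverse $1/\alpha$-stable subordinator has the same Laplace transform (up to the factor $1/\alpha$), so by uniqueness of the LT, $p = h/\alpha$ on $(0,\infty)\times(0,\infty)$. Applying the FT in $x$ to \eqref{LTsol}, which is legitimate thanks to the exponential decay $e^{-xs^{1/\alpha}}$, produces $\bar h(k,s) = s^{-1+1/\alpha}/(s^{1/\alpha}-ik)$, which rearranges to
\begin{equation*}
s^{1/\alpha}\,\bar h(k,s) - s^{-1+1/\alpha} = ik\,\bar h(k,s).
\end{equation*}
Inverting the FT recovers \eqref{LTtimefrac}, and Laplace inversion using the Caputo LT rule $s^{1/\alpha}\widetilde f(s)-s^{1/\alpha-1}f(0)$ with the initial condition $h(x,0)=\delta(x)$ absorbing the $-s^{-1+1/\alpha}\delta(x)$ term yields \eqref{timefrac} for $h$, and hence for $p = h/\alpha$ by linearity. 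Thus the same $\alpha$-stable density $p$ solves both \eqref{fracDEns} and \eqref{timefrac} under the point source initial condition, which is the claimed equivalence.
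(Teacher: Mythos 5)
Your proposal is correct and follows essentially the same route as the paper: Fourier transform of \eqref{fracDEns}, the Fourier--Laplace transform $\bar p(k,s)=(s-(ik)^\alpha)^{-1}$, Fourier inversion by closing a semicircular contour in the lower half-plane around the single pole at $-i s^{1/\alpha}$, identification of the resulting Laplace transform with that of the inverse $\frac1\alpha$-stable subordinator density, and the reverse FT--LT chain yielding \eqref{timefrac}. The only cosmetic differences are that the paper (in its Section 4 treatment of the more general semistable case) runs the inversion contour along $\Ima k=-\xi_0$ and controls the arc by dominated convergence, whereas you integrate along the real axis and use the cleaner bound $O(R^{1-\alpha})$ available in the stable case; both are fine.
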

The proof in \cite{BMN} directly uses Zolotarev duality, whereas the above arguments from \cite{KelMMM} only use FLT techniques and gives the partial result on Zolotarev duality stated in Theorem \ref{2.1} as a byproduct. In the semistable setup corresponding duality results are not known and the above FLT method is our preferable choice in Section 3.

To illustrate Theorem \ref{2.2} we plotted numerical solutions $p(x,t)$ of the fractional diffusion equation \eqref{fracDEns}
and $h(x,t)$ of the time-fractional pde \eqref{timefrac} for fixed $t_{0}=3.5$ and $\alpha=1.5$ in Figure 1. For the stable density $p(x,t_{0})$ in \eqref{solsfracDE} we use a Fourier inversion technique together with the representation \eqref{densparam}, whereas $h(x,t_{0})$ was approximated from \eqref{timefrac} by a finite difference method \cite{MMMTad} involving Gr\"unwald-Letnikov differences for the time-fractional derivative. Note that in Figure 1 the ratio $h(x,t_{0})/p(x,t_{0})$ decreases from the true value $\alpha=1.5$ at $x=0$ almost linearly to $1.2$ at $x=4$ which is an effect of the rather weak approximation by Gr\"unwald-Letnikov differences for which the error increases with the distance from the origin.
\begin{figure}\label{fig1} 
\includegraphics[scale=.55]{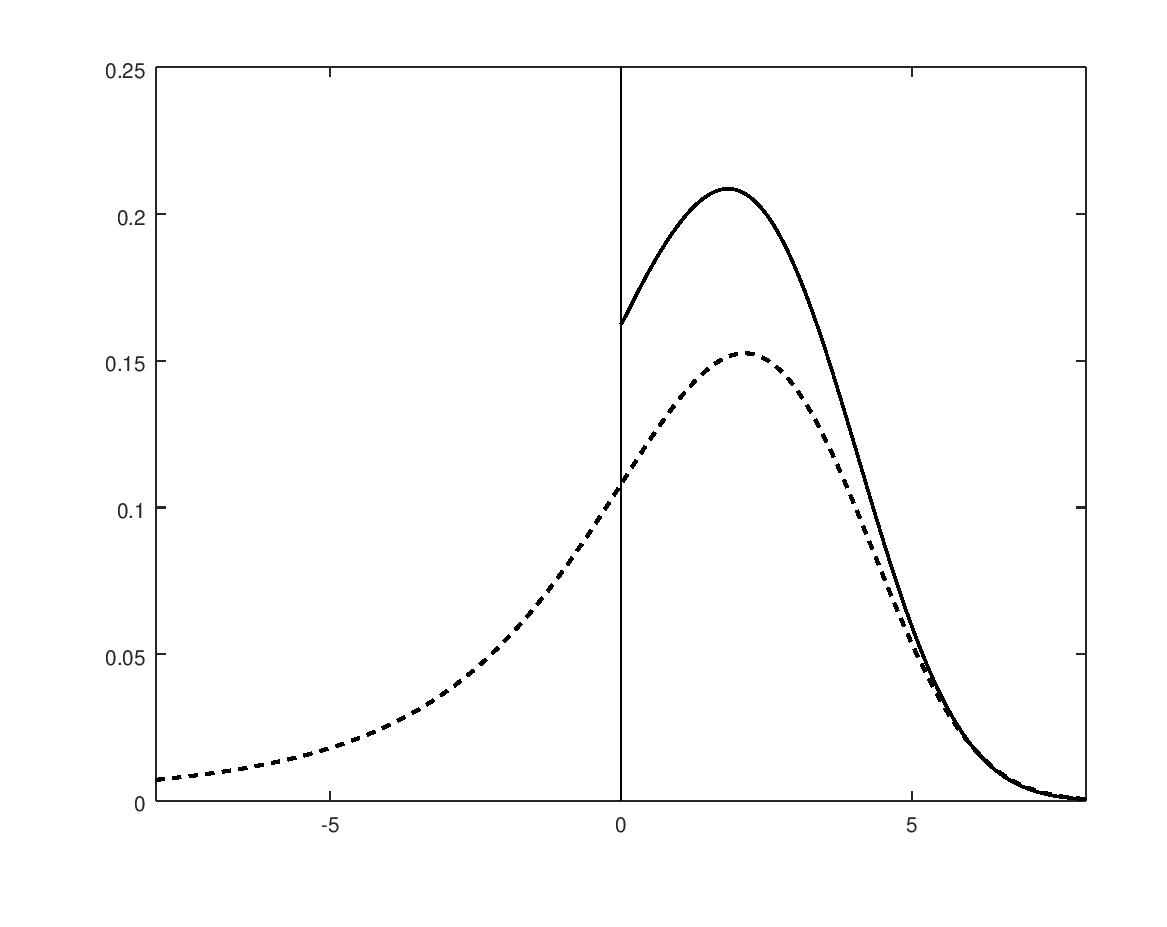}
\caption{Solutions $p(x,t_{0})$ (dashed line) of the fractional diffusion equation \eqref{fracDEns}
and $h(x,t_{0})$ (solid line) of the time-fractional pde \eqref{timefrac} for fixed $t_{0}=3.5$ and $\alpha=1.5$}
\end{figure}
\begin{remark}
The space-time duality in Theorem \ref{2.2} does not cover the full range $\frac1{\alpha}\in(0,1)$ for $\frac1{\alpha}$-stable subordinators. Extending Theorem \ref{2.2} for $\frac1{\alpha}\in(0,\frac12)$ would lead to an equivalent space-fractional pde of order $\alpha>2$ for which in its full generality no meaningful stochastic solution exists. A first result towards this direction is given in \cite{KelMMM2} for $\frac1{\alpha}\in(\frac13,\frac12)$ leading to a probabilistic interpretation of a space-fractional pde of order $\alpha\in(2,3)$ by means of an inverse $\frac1{\alpha}$-stable subordinator. This stochastic solution is much stronger than the higher order approach in \cite{BDOM}.
\end{remark}

\section{Duality for Semi-Fractional Diffusions}\label{sec:3}

We now turn to a negatively skewed semistable distribution for $\alpha\in(1,2)$ with a L\'evy measure $\phi$ as in \eqref{semiLmeas} concentrated on the negative axis
$$\phi(-\infty,-x)=x^{-\alpha}\theta(\log x)\quad,\quad x>0.$$
Here $\theta$ is an admissable function, i.e.\ $\theta$ is a positive, $\log(c^{1/\alpha})$-periodic function for some $c>1$ and $x\mapsto x^{-\alpha}\theta(\log x)$ is non-increasing. We will further assume that $\theta$ is smooth, i.e.\ $\theta$ is continuous and piecewise continuously differentiable, hence representable by a Fourier series
$$\theta(x)=\sum_{n\in\ganz}c_{n}\,e^{in\widetilde{c} x}\quad \text{ with }\quad\widetilde{c}=\frac{2\pi\alpha}{\log c}.$$
In the special case of constant $\theta\equiv c_{0}=\frac{\alpha-1}{\Gamma(2-\alpha)}$ and $\mu=\int_{-\infty}^{0}(\frac{x}{1+x^{2}}-x)\,d\phi(x)$ in \eqref{logchar} this reduces to the stable distribution corresponding to the fractional diffusion equation \eqref{fracDEns}. For the more general semistable distribution  with the same drift parameter $\mu$ the corresponding semi-fractional diffusion equation is given by
\begin{equation}\label{semifracDE}
\frac{\partial}{\partial t}p(x,t)=\frac{\partial^\alpha}{\partial_{c,\theta} (-x)^\alpha} p(x,t).
\end{equation}
Here, for suitable functions $f$ the negative semi-fractional derivative of order $\alpha\in(1,2)$ was recently introduced in \cite{KLM} by its generator form
\begin{equation}\label{semifracder}\begin{split}
\frac{\partial^\alpha}{\partial_{c,\theta} (-x)^\alpha}f(x)=Lf(x) & =\int_{-\infty}^{0}\left(f(x-y)-f(x)+yf'(x)\right)\,d\phi(y)\\
& =\int_{0}^{\infty}\left(f'(x+y)-f'(x)\right)y^{-\alpha}\theta(\log y)\,dy,
\end{split}\end{equation}
where the last equality follows from reflection and integration by parts. As shown in \cite{KLM}, with this definition the negatively skewed semistable densities $x\mapsto p(x,t)$ are a solution to \eqref{semifracDE}. Moreover, it was shown in \cite{KLM} that the corresponding log-characteristic function admits the series representation
\begin{equation}\label{logcharsemi}
\psi(k)=-\sum_{n\in\ganz}c_{n}\,\Gamma(in\widetilde{c}-\alpha+1)(ik)^{\alpha-in\widetilde{c}}
\end{equation}
which for the stable case $\theta\equiv c_{0}=\frac{\alpha-1}{\Gamma(2-\alpha)}=-\frac1{\Gamma(1-\alpha)}$ reduces to $\psi(k)=(ik)^{\alpha}$ and gives back the negative Riemann-Liouville fractional derivative of order $\alpha\in(1,2)$. Applying the FLT on both sides of \eqref{semifracDE} again yields $\bar{p}(k,s)=\frac{1}{s-\psi(k)}$ as in \eqref{FLTsol} for the corresponding semistable densities, but now with $\psi$ from \eqref{logcharsemi}. We will show in Lemma \ref{4.1} that the FLT $\bar{p}$ has again a single pole at some $k=-i\xi(s)$ on the negative imaginary axis which enables us to invert the FT by the help of Cauchy's residue theorem to come to:
\begin{theorem}\label{3.1}
For $\alpha\in(1,2)$ the LT with respect to time of the semistable densities corresponding to the the semi-fractional diffusion equation \eqref{semifracDE} takes the form
\begin{equation}\label{LTsemifrac}
\widetilde{p}(x,s)=\frac1{\alpha}\,\frac{s^{1/\alpha}g(\log s)\exp\left(-x\,s^{1/\alpha}g(\log s)\right)}{s+f(s)}=:\frac1{\alpha}\,\widetilde{h}(x,s),
\end{equation}
where $g$ is a continuously differentiable, $\log(c)$-periodic function and $f$ is some specific function such that $s+f(s)>0$. Moreover, $f$ and $g$ only depend on $c>1$, $\alpha\in(1,2)$ and the admissible function $\theta$.
\end{theorem}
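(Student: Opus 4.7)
The plan is to adapt the Fourier--Laplace inversion used for the stable case in Section~\ref{sec:2}. Starting from
\[
\bar p(k,s)=\frac{1}{s-\psi(k)}
\]
with $\psi$ as in \eqref{logcharsemi}, I invoke Lemma~\ref{4.1}, which locates a unique simple pole of $\bar p(\cdot,s)$ on the negative imaginary axis at $k=-i\xi(s)$ with $\xi(s)>0$. For $x>0$ I invert the Fourier transform via
\[
\widetilde p(x,s)=\frac{1}{2\pi}\int_{\rr}e^{-ikx}\,\bar p(k,s)\,dk,
\]
closing the contour in the lower half-plane, where $|e^{-ikx}|=e^{x\,\Ima k}$ decays. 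The leading behaviour $|\psi(k)|\sim|k|^\alpha$ with $\alpha>1$ should make the arc integral vanish via a Jordan-type estimate. The residue theorem then yields
\[
\widetilde p(x,s)=\frac{i\,e^{-x\xi(s)}}{\psi'(-i\xi(s))}.
\]

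Implicit differentiation of $\psi(-i\xi(s))=s$ in $s$ gives $-i\,\psi'(-i\xi(s))\,\xi'(s)=1$, hence $\psi'(-i\xi(s))=i/\xi'(s)$, and the formula above collapses to $\widetilde p(x,s)=\xi'(s)\,e^{-x\xi(s)}$. The remaining task is to put $\xi$ in the desired form. Setting $k=-i\xi$ makes $ik=\xi>0$ (well away from any branch cut), so the defining equation $\psi(-i\xi(s))=s$ becomes, from \eqref{logcharsemi},
\[
s=\xi(s)^{\alpha}\,\Phi\bigl(\log\xi(s)\bigr),\qquad \Phi(y):=-\sum_{n\in\ganz}c_{n}\,\Gamma(in\widetilde c-\alpha+1)\,e^{-in\widetilde c\,y}.
\]
Since $\widetilde c=2\pi\alpha/\log c$, one has $e^{-in\widetilde c\log(c^{1/\alpha})}=e^{-2\pi i n}=1$, so $\Phi$ is $\log(c^{1/\alpha})$-periodic and depends only on $c,\alpha,\theta$. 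If $\xi$ solves the above at some $s$, then $c^{1/\alpha}\xi$ solves it at $cs$, giving the scaling $\xi(cs)=c^{1/\alpha}\xi(s)$. Defining $g(\log s):=\xi(s)/s^{1/\alpha}$ converts this scaling into $\log(c)$-periodicity of $g$; smoothness of $g$ follows from the implicit function theorem applied to the analytic equation $\psi(-i\xi)=s$, whose non-degeneracy $\psi'(-i\xi)\ne0$ is the simplicity of the pole in Lemma~\ref{4.1}.

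Substituting $\xi(s)=s^{1/\alpha}g(\log s)$ and differentiating gives
\[
\xi'(s)=\frac{s^{1/\alpha}}{\alpha s}\bigl(g(\log s)+\alpha g'(\log s)\bigr),
\]
which, compared against the target form $\tfrac{1}{\alpha}s^{1/\alpha}g(\log s)\,e^{-x\xi(s)}/(s+f(s))$, forces
\[
s+f(s)=\frac{s\,g(\log s)}{g(\log s)+\alpha g'(\log s)},\qquad f(s):=-\frac{\alpha s\,g'(\log s)}{g(\log s)+\alpha g'(\log s)}.
\]
Positivity $s+f(s)>0$ then follows from $\xi'(s)>0$, which is again built into Lemma~\ref{4.1} through the monotonicity of $s\mapsto\xi(s)$, and both $f$ and $g$ depend only on $c,\alpha,\theta$ by construction. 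The main obstacle I anticipate is not this algebra but the analytic content of Lemma~\ref{4.1}: fixing consistent branches of the complex powers $(ik)^{\alpha-in\widetilde c}$ so that $\psi$ extends analytically to the closed lower half-plane; proving convergence of the series \eqref{logcharsemi} with tail estimates strong enough to justify the Jordan argument on the closing semicircle; and showing that $s-\psi(k)=0$ has a unique, simple solution on the negative imaginary axis. Once Lemma~\ref{4.1} is in place, the outline above delivers the claimed representation.
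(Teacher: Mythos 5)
Your proposal follows essentially the same route as the paper: Fourier inversion of $\bar p(k,s)=(s-\psi(k))^{-1}$ by closing a contour in the lower half plane, picking up the residue at the pole $-i\xi(s)$ from Lemma~\ref{4.1}, and deriving the $\log(c)$-periodicity of $g$ from the scaling relation $\xi(cs)=c^{1/\alpha}\xi(s)$ exactly as in Lemma~\ref{4.2}. The only cosmetic difference is that you obtain $\widetilde p(x,s)=\xi'(s)e^{-x\xi(s)}$ by implicit differentiation and read off $f$ in terms of $g'$, whereas the paper differentiates the series representation $\psi(-ik)=k^\alpha m(\log k)$ and writes $f(s)=\frac1{\alpha}\xi(s)^\alpha m'(\log\xi(s))$ --- the two expressions agree, and the analytic points you flag (branch choice, convergence of the series, vanishing of the arc integral) are precisely what the paper settles via \eqref{logcharsemiC}--\eqref{logcharsemi-iC} and dominated convergence.
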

The proof of Theorem \ref{3.1} is given in Section 4. As in Section 2 we now calculate the FT of $\widetilde{h}$ on the right-hand side of \eqref{LTsemifrac} and then apply FLT inversion which also justifies the LT notation $\widetilde{h}(x,s)$ in Theorem \ref{3.1}. Writing $\xi(s)=s^{1/\alpha}g(\log s)$ to simplify notation (it turns out that this is indeed the location of the pole of $\bar{p}(k,s)$ on the negative imaginary axis stated above) and applying FT for $x>0$ to \eqref{LTsemifrac} yields
\begin{align*}
\bar{h}(k,s)& =\frac{\xi(s)}{s+f(s)}\int_0^\infty\exp\left(-x\left(\xi(s)-ik\right)\right)\,dx\\
& =\frac{\xi(s)}{s+f(s)}\,\frac{1}{\xi(s)-ik}=\left(\frac1{s}-\frac1{s}\,\frac{f(s)}{s+f(s)}\right)\frac{\xi(s)}{\xi(s)-ik},
\end{align*}
which leads to the equation
$$\xi(s)\bar{h}(k,s)-s^{-1}\xi(s)-ik\bar{h}(k,s)=-\frac1{s}\,\frac{f(s)}{s+f(s)}\xi(s)=:\frac1{s}\,s^{1/\alpha}\gamma(\log s).$$
Inverting the FT on both sides gives
\begin{equation}\label{LTtimefracsemi}
\xi(s)\widetilde{h}(x,s)-s^{-1}\xi(s)\delta(x)+\frac{\partial}{\partial x}\widetilde{h}(x,s)=\frac1{s}\,s^{1/\alpha}\gamma(\log s)\delta(x).
\end{equation}
We will show in Lemma \ref{4.3} that $\gamma$ is a smooth $\log(c)$-periodic function and thus $\gamma$ and $g$ from Theorem \ref{3.1} both admit a Fourier series representation
\begin{equation}\label{gdef}
g(x)=\sum_{n\in\ganz}d_{n}\,e^{-in\widetilde{d} x}\quad \text{ and }\quad\gamma(x)=\sum_{n\in\ganz}h_{n}\,e^{-in\widetilde{d} x}
\end{equation}
with $\widetilde{d}=\frac{2\pi}{\log c}=\frac{2\pi\frac1{\alpha}}{\log d}
$ for $d=c^{1/\alpha}>1$. Let us define the functions
\begin{equation}\label{taudef}
\tau(x)=\sum_{n\in\ganz}\frac{d_{n}}{\Gamma(in\widetilde{d}-\frac1{\alpha}+1)}\,e^{in\widetilde{d} x}\quad \text{ and }\quad \rho(x)=\sum_{n\in\ganz}\frac{h_{n}}{\Gamma(in\widetilde{d}-\frac1{\alpha}+1)}\,e^{in\widetilde{d} x}
\end{equation}
which clearly are $\log(d^{\alpha})$-periodic functions. Note that formally $\tau(-\log s)$ and $\rho(-\log s)$  are related to $\xi(s)=s^{1/\alpha}g(\log s)$ and $s^{1/\alpha}\gamma(\log s)$ in the same manner than $\theta(-\log(ik))$ is related to $-\psi(k)$ in \eqref{logcharsemi}, simply by multiplying the Fourier coefficients with appropriate values of the gamma function depending on the admissability parameters. We conjecture that $\tau$ and $\rho$ are admissable with respect to the parameters $d>1$ and $\frac1{\alpha}\in(\frac12,1)$. If so, then for suitable functions $f$ and $t\geq0$ we may formally introduce the Riemann-Liouville and the Caputo semi-fractional derivative by LT inversion in analogy to time-fractional derivatives:
\begin{align*}
\frac{\partial^{1/\alpha}}{\partial_{d,\tau}\,t^{1/\alpha}}f(t)=r(t) & \quad\iff\quad\widetilde{r}(s)=\xi(s)\widetilde{f}(s),\\
\left(\frac{\partial}{\partial_{d,\tau}t}\right)^{1/\alpha}f(t)=r(t) & \quad\iff\quad\widetilde{r}(s)=\xi(s)\widetilde{f}(s)-s^{-1}\xi(s)f(0).
\end{align*}
\begin{remark}\label{3.2}
It is worth to mention that this formal introduction of semi-fractional derivatives for functions on the positive real line can be strengthened from a probabilistic perspective. In fact the densities $h(x,t)$ of an inverse $\frac1{\alpha}$-semistable subordinator with a $\log(d^\alpha)$-periodic admissable function $\tau$ in the positive tail of the L\'evy measure solve the semi-fractional pde
$$\left(\frac{\partial}{\partial_{d,\tau}t}\right)^{1/\alpha}h(x,t)=-\frac{\partial}{\partial x}h(x,t)$$
in analogy to \eqref{timefrac} for the densities of an inverse $\frac1{\alpha}$-stable subordinator. This fact is outside the scope of this article and will be published elsewhere.
\end{remark}
Finally, since $\frac1{s}=\int_0^\infty e^{-st}\, dt$ is the LT of the function $1_{(0,\infty)}(t)$, we may now rewrite \eqref{LTtimefracsemi} as
\begin{equation}\label{semitimefrac}
\left(\frac{\partial}{\partial_{d,\tau}t}\right)^{1/\alpha}h(x,t)+\frac{\partial}{\partial x}h(x,t)=\delta(x)\,\frac{\partial^{1/\alpha}}{\partial_{d,\rho}\,t^{1/\alpha}}1_{(0,\infty)}(t).
\end{equation}
Similar to \eqref{semifracder}, for suitable functions $f$ the semi-fractional Caputo derivative of order $\frac1{\alpha}\in(0,1)$ (here we have $\frac1{\alpha}\in(\frac12,1)$) with respect to $d>1$ and the admissable function $\rho$ is given in \cite{KLM} by
\begin{equation}\label{Csemifracder}
\left(\frac{\partial}{\partial_{d,\rho} t}\right)^{1/\alpha}f(t)=\int_{0}^{\infty}f'(t-s) s^{-1/\alpha}\rho(\log s)\,ds
\end{equation}
and the corresponding Riemann-Liouville derivative is obtained by interchanging differentiation and integration on the right-hand side of \eqref{Csemifracder}. Hence, on the right-hand side of \eqref{semitimefrac} we get
\begin{align*}
\frac{\partial^{1/\alpha}}{\partial_{d,\rho}\,t^{1/\alpha}}1_{(0,\infty)}(t) & =\frac{d}{dt}\int_{0}^{\infty}1_{(0,\infty)}(t-s) \,s^{-1/\alpha}\rho(\log s)\,ds\\
& = \frac{d}{dt}\int_{0}^{t}s^{-1/\alpha}\rho(\log s)\,ds=t^{-1/\alpha}\rho(\log t)
\end{align*}
which yields 
\begin{equation}\label{semitimefracinhom}
\left(\frac{\partial}{\partial_{d,\tau}t}\right)^{1/\alpha}h(x,t)+\frac{\partial}{\partial x}h(x,t)=\delta(x)\,t^{-1/\alpha}\rho(\log t).
\end{equation}
Thus we have shown space-time duality for semi-fractional diffusions:
\begin{theorem}\label{3.3}
Assume that $\tau$ and $\rho$ in \eqref{taudef} are admissable functions with respect to the parameters $d=c^{1/\alpha}>1$ and $\frac1{\alpha}\in(\frac12,1)$. Then for $x>0$ and $t>0$ the point source solutions of the semi-fractional diffusion equation \eqref{semifracDE} of order $\alpha\in(1,2)$ in space and of the semi-fractional pde \eqref{semitimefracinhom} of order $\frac1{\alpha}\in(\frac12,1)$ in time are equivalent, i.e.\ $p(x,t)=\alpha^{-1}h(x,t)$ for all $x>0$ and $t>0$.
\end{theorem}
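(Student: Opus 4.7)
The plan is to derive Theorem \ref{3.3} directly from Theorem \ref{3.1} together with the Fourier--Laplace calculations carried out in the paragraphs immediately preceding its statement. Under the standing admissability hypothesis on $\tau$ and $\rho$, two assertions need verification: first, that the function $h(x,t)$ whose time-Laplace transform is the $\widetilde{h}(x,s)$ of Theorem \ref{3.1} really solves the semi-fractional pde \eqref{semitimefracinhom}; second, that the Laplace identity $\widetilde{p}(x,s) = \alpha^{-1}\widetilde{h}(x,s)$ upgrades to the pointwise identity $p(x,t) = \alpha^{-1} h(x,t)$ for $x,t>0$.

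For the first point I would reproduce in reverse the chain running from \eqref{LTsemifrac} through \eqref{LTtimefracsemi} to \eqref{semitimefracinhom}. Starting from the explicit formula for $\widetilde{h}(x,s)$, one takes the spatial Fourier transform on $\{x>0\}$ as in the displayed computation of $\bar h(k,s)$, rearranges to obtain the algebraic identity preceding \eqref{LTtimefracsemi}, and then inverts the spatial Fourier transform. Under the hypothesis that $\tau$ and $\rho$ from \eqref{taudef} are admissable with respect to $d=c^{1/\alpha}>1$ and $\tfrac{1}{\alpha}\in(\tfrac12,1)$, the multipliers $\xi(s)=s^{1/\alpha}g(\log s)$ and $s^{1/\alpha}\gamma(\log s)$ appearing in \eqref{LTtimefracsemi} coincide with the Laplace symbols of the Caputo and Riemann--Liouville semi-fractional derivatives of order $\tfrac{1}{\alpha}$ attached to $\tau$ and $\rho$ respectively. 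This identification is the time-variable mirror of the relation between $-\psi(k)$ and the spatial semi-fractional derivative in \eqref{logcharsemi}, and is precisely what the gamma-function matching in \eqref{taudef} encodes. Together with the elementary computation already displayed, giving $t^{-1/\alpha}\rho(\log t)$ for the Riemann--Liouville semi-fractional derivative of $1_{(0,\infty)}$, Laplace inversion in $t$ delivers \eqref{semitimefracinhom}.

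For the second point, both $p(x,\cdot)$ and $h(x,\cdot)$ are continuous and integrable on $(0,\infty)$ --- $p$ as a smooth semistable density and $h$ as the density of an inverse $\tfrac{1}{\alpha}$-semistable subordinator (cf.\ Remark \ref{3.2}) --- so equality of their time-Laplace transforms on a right half-plane forces pointwise equality by uniqueness of the Laplace transform. Combined with the first point this simultaneously shows $p(x,t) = \alpha^{-1} h(x,t)$ and that $p$ itself solves \eqref{semitimefracinhom} under the point-source initial condition, which is the desired equivalence.

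The main obstacle is the rigorous justification of the Fourier-series manipulations implicit in identifying the Laplace multipliers with semi-fractional derivative operators. The definitions in \eqref{taudef} divide the Fourier coefficients $d_n$ of $g$ and $h_n$ of $\gamma$ by $\Gamma(in\widetilde d - \tfrac{1}{\alpha}+1)$, so one needs adequate decay of $(d_n)$ and $(h_n)$ both for the resulting series to define functions that can legitimately be plugged into the defining integral \eqref{Csemifracder} and for termwise Laplace inversion to be applied. This decay should follow from smoothness of $\theta$, passing through Theorem \ref{3.1} and Lemma \ref{4.3}, but it is at exactly this point that the theorem has to assume admissability of $\tau$ and $\rho$ as a hypothesis rather than derive it as a consequence; granted the hypothesis, the chain of Fourier--Laplace manipulations becomes rigorous and the theorem follows.
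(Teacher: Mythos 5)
Your proposal is correct and follows essentially the same route as the paper: Theorem \ref{3.3} is obtained exactly by combining Theorem \ref{3.1} with the Fourier--Laplace chain \eqref{LTsemifrac} $\to$ \eqref{LTtimefracsemi} $\to$ \eqref{semitimefrac} $\to$ \eqref{semitimefracinhom}, using the admissability hypothesis on $\tau$ and $\rho$ to identify the Laplace multipliers $\xi(s)$ and $s^{1/\alpha}\gamma(\log s)$ with the semi-fractional derivatives in time and the computation $\frac{\partial^{1/\alpha}}{\partial_{d,\rho}\,t^{1/\alpha}}1_{(0,\infty)}(t)=t^{-1/\alpha}\rho(\log t)$ for the inhomogeneity. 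Your additional appeal to uniqueness of the Laplace transform to pass from $\widetilde{p}=\alpha^{-1}\widetilde{h}$ to $p=\alpha^{-1}h$ is a reasonable explicit statement of a step the paper leaves implicit, and your closing observation correctly locates why admissability of $\tau$ and $\rho$ must be assumed rather than derived.
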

Note that with $f$ and $g$ also $\tau$ and $\rho$ do only depend on $c>1$, $\alpha\in(1,2)$ and the admissable function $\theta$ of the underlying semistable distribution.

\section{Proofs for Section \ref{sec:3}}\label{sec:4}

For simplicity, we write $\omega_n=-c_n\Gamma(in\widetilde{c}-\alpha+1)$ for the coefficients in \eqref{logcharsemi}. Extending $\psi$ for $z\in\complex$ shows that
\begin{equation}\label{logcharsemiC}
\psi(z)=\sum_{n\in\ganz}\omega_{n}(iz)^{\alpha-in\widetilde{c}}=(iz)^\alpha\sum_{n\in\ganz}\omega_{n}e^{-in\widetilde{c}\log(iz)}
\end{equation}
is an analytic function in the lower half plane, where the series in \eqref{logcharsemiC} is absolutely convergent by Theorem 3.1 in \cite{KLM}, and $\psi$ admits the representation
\begin{equation}\label{logcharsemiCint}
\psi(z)=\int_{-\infty}^0\left(e^{izx}-1-izx\right)\,d\phi(x).
\end{equation}
Moreover, since $\omega_{-n}=\overline{\omega_n}$ for $n\in\ganz$, the function 
\begin{equation}\label{logcharsemi-iC}
\psi(-ik)=k^\alpha\sum_{n\in\ganz}\omega_{n}e^{-in\widetilde{c}\log(k)}=:k^\alpha m(\log k)
\end{equation}
for $k>0$ is a real function such that $m$ is $\log(c^{1/\alpha})$-periodic.
\begin{lemma}\label{4.1}
For any $s>0$ there is a unique $z=z(s)$ in the lower half plane such that $s=\psi(z(s))$. Moreover, $z(s)=-i\,\xi(s)$ with $\xi(s)>0$ lies on the negative imaginary axis.
\end{lemma}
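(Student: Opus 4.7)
The plan is to split the argument into existence (finding $z(s)=-i\xi(s)$ on the negative imaginary axis) and uniqueness (ruling out other preimages in the open lower half-plane).

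For existence, I would substitute $z=-i\xi$ with $\xi>0$ into the integral representation \eqref{logcharsemiCint}, obtaining
\[
F(\xi):=\psi(-i\xi)=\int_{-\infty}^{0}(e^{\xi x}-1-\xi x)\,d\phi(x).
\]
For $x<0$ the integrand is strictly positive in $\xi>0$, and its $\xi$-derivative $x(e^{\xi x}-1)$ is strictly positive as well, so by monotone convergence $F$ is continuous, strictly increasing on $(0,\infty)$ with $F(0^+)=0$. Combined with \eqref{logcharsemi-iC} we have $F(\xi)=\xi^{\alpha}m(\log\xi)$ with $m$ continuous, $\log(c^{1/\alpha})$-periodic, and strictly positive on $\rr$ (by the preceding positivity). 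Hence $m$ attains a positive minimum $m_0$ on its period, which forces $F(\xi)\geq m_0\xi^\alpha\to\infty$. Thus $F:(0,\infty)\to(0,\infty)$ is a homeomorphism, and there is a unique $\xi(s)>0$ with $F(\xi(s))=s$, yielding the candidate $z(s)=-i\xi(s)$.

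For uniqueness in the open lower half-plane, I would apply the argument principle to $g(z):=\psi(z)-s$ on the contour $\Gamma_R=[-R,R]\cup\{Re^{-i\theta}:\theta\in[0,\pi]\}$, traversed positively around the lower half-disk, and show that the total change of $\arg g$ along $\Gamma_R$ equals $2\pi$ for all large $R$. On the real segment, $\operatorname{Re}\psi(a)=-\int_{-\infty}^{0}(1-\cos(ax))\,d\phi(x)\leq 0$, so $\operatorname{Re}g(a)\leq-s<0$ keeps $g(a)$ inside the open left half-plane; using the asymptotics $\psi(a)\sim|a|^\alpha e^{\pm i\alpha\pi/2}m(\log|a|\pm i\pi/2)$ as $a\to\pm\infty$, the argument of $g$ changes by $2\pi-\alpha\pi$ as $a$ decreases from $R$ to $-R$. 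On the semicircle, writing $iz=Re^{i(\pi/2-\theta)}$, the leading factor $(iz)^\alpha=R^\alpha e^{i\alpha(\pi/2-\theta)}$ rotates the argument by $+\alpha\pi$ as $\theta$ runs from $\pi$ down to $0$. The two contributions add to exactly $2\pi$, so $g$ has precisely one zero in the lower half-disk, which by the existence step must coincide with $-i\xi(s)$.

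The main technical obstacle is justifying the winding count rigorously in the presence of the log-periodic modulation $m(\log(iz))$. One must verify that $m$ does not vanish on $\Gamma_R$ and that $\arg m$ has uniformly bounded variation along the contour as $R\to\infty$, so that the stable leading term $(iz)^\alpha$ dictates the argument count. This rests on the absolute convergence of the series $m(y)=\sum_{n\in\ganz}\omega_n e^{-in\widetilde{c}y}$ on the strip $|\operatorname{Im}y|<\pi/2$, enabled by the $\Gamma$-factor decay $|\omega_n|\lesssim e^{-\pi|n|\widetilde{c}/2}$ via Stirling, together with the uniform positive lower bound on $m$ over $\rr$ obtained in the existence step.
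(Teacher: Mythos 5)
Your existence argument is correct and is essentially the paper's: the paper also considers $s(k)=\psi(-ik)$ for $k>0$, computes $s'(k)=\int_{-\infty}^0 x(e^{kx}-1)\,d\phi(x)>0$ from \eqref{logcharsemiCint}, and concludes $s(\cdot)$ is a continuous, strictly increasing bijection of $(0,\infty)$ onto itself. Your justification of $F(\xi)\to\infty$ via the positive minimum of the periodic function $m$ is a reasonable way to make the paper's terse claim precise.

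The uniqueness half, however, has a genuine gap. Writing the total change of $\arg g$ around $\Gamma_R$ as the sum of the two pieces, the $\arg m$ endpoint contributions cancel between the segment and the semicircle, and what remains is $2\pi(1+k_R)$, where $k_R\in\ganz$ records how many extra times $\arg m$ winds along the vertical segment $\{\log R+it:|t|\leq\pi/2\}$ of the strip. Your proposed control --- non-vanishing of $m$ on $\Gamma_R$ plus \emph{uniformly bounded} variation of $\arg m$ --- does not force $k_R=0$: a bounded but nonzero integer winding would give $N\geq2$ zeros, and nothing in your sketch excludes this. (Note also that non-vanishing of $m$ on the \emph{closed} strip $|\Ima y|\leq\pi/2$ is equivalent to $\psi$ having no zeros in the closed lower half plane off the negative imaginary axis, which is essentially an instance of the statement you are trying to prove; the positivity of $m$ on $\rr$ only handles the axis itself.) The missing ingredient is a sign statement: for $z=a-ib$ with $b\geq0$ and $a\neq0$, the representation \eqref{logcharsemiCint} gives $\Ima\psi(z)=\int_{-\infty}^0\left(e^{bx}\sin(ax)-ax\right)d\phi(x)$, and since $|e^{bx}\sin(ax)|<|ax|=-ax\cdot\sign(a)$ for $x<0$, this has the strict sign of $a$. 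Once you have this, the contour integral is superfluous: $\psi(z)=s\in\rr$ already forces $z$ onto the negative imaginary axis, and monotonicity there finishes the proof. This is exactly the paper's (one-line) uniqueness argument. Alternatively, the same sign information confines $\psi$ on the two quarter-circles to the lower, respectively upper, half plane and thereby pins the semicircle contribution into $(\pi,2\pi)$, which does rescue your winding count --- but at that point the argument principle is doing no work that the direct observation has not already done.
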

\begin{proof}
From \eqref{logcharsemiCint} it can be deduced that for $z$ in the lower half plane $\psi(z)\in\rr$ iff $z=-ik$ with $k>0$. If we consider the real mapping $s(k)=\psi(-ik)$ for $k>0$ then by \eqref{logcharsemiCint}
$$s'(k)=\int_{-\infty}^0x\left(e^{kx}-1\right)\,d\phi(x)>0$$
and thus $k\mapsto s(k)$ is a continuously differentiable and strictly increasing function with $\lim_{k\downarrow0}s(k)=0$ and $\lim_{k\to\infty}s(k)=\infty$. Hence, for $s>0$ there is a unique $\xi(s)>0$ with $s=\psi(-i\,\xi(s))$.
\end{proof}
\begin{lemma}\label{4.2}
The function $\xi$ from Lemma \ref{4.1} is continuously differentiable and for $s>0$ we have $\xi(s)=s^{1/\alpha}g(\log s)$ for some $\log(c)$-periodic function $g$.
\end{lemma}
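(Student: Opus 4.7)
The plan has two independent parts, the smoothness of $\xi$ and its semi-scaling decomposition.

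For continuous differentiability, I would invoke the inverse function theorem on the real map $k\mapsto s(k)=\psi(-ik)$. The proof of Lemma~\ref{4.1} already shows that this map is $C^1$ on $(0,\infty)$ with strictly positive derivative
$$s'(k)=\int_{-\infty}^{0}x(e^{kx}-1)\,d\phi(x)>0,$$
and that it is a bijection from $(0,\infty)$ onto $(0,\infty)$. Hence its inverse $\xi$ is $C^1$ on $(0,\infty)$, with $\xi'(s)=1/s'(\xi(s))>0$. (To justify $C^1$-regularity of $s(k)$, one may either differentiate under the integral using the integrability of $x(e^{kx}-1)$ against $\phi$ on $(-\infty,0)$, or argue directly from the absolutely convergent series representation \eqref{logcharsemi-iC}.)

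The key step is the semi-scaling identity, which I would extract from \eqref{logcharsemi-iC}. Since $m$ has period $\log(c^{1/\alpha})$, for any $k>0$
$$\psi(-i c^{1/\alpha}k)=(c^{1/\alpha}k)^\alpha m\bigl(\log k+\log(c^{1/\alpha})\bigr)=c\,k^\alpha m(\log k)=c\,\psi(-ik).$$
Applying this at $k=\xi(s)$ gives $\psi(-i c^{1/\alpha}\xi(s))=c s$, and the uniqueness part of Lemma~\ref{4.1} forces
$$\xi(cs)=c^{1/\alpha}\xi(s)\qquad\text{for all }s>0.$$

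Finally I would define
$$g(u):=e^{-u/\alpha}\,\xi(e^{u}),\qquad u\in\rr,$$
so that $\xi(s)=s^{1/\alpha}g(\log s)$ by construction. The semi-scaling identity just established yields
$$g(u+\log c)=e^{-u/\alpha}c^{-1/\alpha}\,\xi(c\,e^{u})=e^{-u/\alpha}c^{-1/\alpha}\cdot c^{1/\alpha}\xi(e^{u})=g(u),$$
so $g$ is $\log(c)$-periodic, and $g\in C^{1}$ because $\xi$ is. No step looks hard; the only thing worth being careful about is matching the periods, i.e.\ translating the $\log(c^{1/\alpha})$-periodicity of $m$ in the variable $\log k$ into the $\log(c)$-periodicity of $g$ in the variable $\log s=\alpha\log k+\log m(\log k)+O(1)$, which is precisely what the change of variables $s=\psi(-ik)$ accomplishes through the uniqueness statement.
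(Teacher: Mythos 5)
Your proof is correct and follows essentially the same route as the paper: invertibility of $s(k)=\psi(-ik)$ for the $C^1$ claim, the periodicity of $m$ to get $\psi(-ic^{1/\alpha}k)=c\,\psi(-ik)$ and hence $\xi(cs)=c^{1/\alpha}\xi(s)$ via uniqueness, and the definition $g(x)=e^{-x/\alpha}\xi(e^x)$. No substantive differences.
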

\begin{proof}
Since $\xi$ is the inverse of the function $k\mapsto s(k)=\psi(-ik)$ appearing in the proof of Lemma \ref{4.1}, it is itself continuously differentiable and strictly increasing. By \eqref{logcharsemi-iC} we get
\begin{align*}
\psi\left(-i\,c^{1/\alpha}\xi(s)\right) & =c\,\xi(s)^\alpha m\left(\log(c^{1/\alpha})+\log\xi(s)\right)\\
& =c\,\xi(s)^\alpha m\left(\log\xi(s)\right)=c\,\psi(-i\,\xi(s))\\
& =cs=\psi(-i\,\xi(cs))
\end{align*}
and thus we have $c^{1/\alpha}\xi(s)=\xi(cs)$. Defining $g(x)=e^{-x/\alpha}\xi(e^x)$ we get
$$g(x+\log c)=e^{-x/\alpha}c^{-1/\alpha}\xi(c\,e^x)=e^{-x/\alpha}\xi(e^x)=g(x).$$
\end{proof}
\begin{proof}[of Theorem \ref{3.1}]
Using equation (4.8.18) in \cite{MorFes}, an inversion of the FT of $\bar{p}(k,s)=(s-\psi(k))^{-1}$ for fixed $s>0$ gives
\begin{equation}\label{FTinverse}
\widetilde{p}(x,s)=\frac1{2\pi}\lim_{T\to\infty}\int_{-T-i\xi_0}^{T-i\xi_0}\frac{e^{-ikx}}{s-\psi(k)}\,dk,
\end{equation}
where we choose $\xi_0\in(0,\xi(s))$. For large $T>0$ consider the cut semicircle $C_T+L_T$ in the lower half plane as in the picture.
\begin{center}
\begin{tikzpicture}
\draw [<-,line width=0.5mm,>=stealth] (-1.95,0) arc (195:346:2cm);
\draw (-2,0.5) arc (180:360:2cm);
\draw [<-,line width=0.5mm,>=stealth] [->] (-1.95,0) -- (1.95,0);
\node at (2,0.7) {$T$};
\node at (-2,0.7) {$-T$};
\node at (1.8,-1.1) {$C_T$};
\node at (0.8,0.25) {$L_T$};
\node at (-0.4,0.15) {\small $-i\,\xi_0$};
\node at (-0.6,-0.8) {\small $-i\,\xi(s)$};
\node at (0,-0.8) {$\bullet$};
\draw [->] (0,-2) -- (0,1);
\draw [->] (-2.5,0.5) -- (2.5,0.5);
\end{tikzpicture}
\end{center}
Letting $k=T\,e^{-i\varphi}$ we get
$$\left|\int_{C_T}\frac{e^{-ikx}}{s-\psi(k)}\,dk\right|\leq\int_0^\pi\frac{T\exp(-Tx\sin\varphi)}{|s-\psi(T\,e^{-i\varphi})|}\,d\varphi\to0$$
as $T\to\infty$ by dominated convergence, since we can easily derive $\Rea\psi(T\,e^{-i\varphi})\to\infty$ for $\varphi\in(0,\pi)$. By Lemma \ref{4.1} and Cauchy's residue theorem we get from \eqref{FTinverse} with the function $s(k)$ from the proof of Lemma \ref{4.1}
\begin{align*}
\widetilde{p}(x,s) & =-i\,\Res(-i\,\xi(s))=\frac{i\,e^{-x\xi(s)}}{\psi'(-i\,\xi(s))}=\frac{e^{-x\xi(s)}}{s'(\xi(s))}\\
& =\frac{e^{-x\xi(s)}}{\xi(s)^{\alpha-1}\left(\alpha\,m(\log \xi(s))+m'(\log \xi(s))\right)}\\
& =\frac1{\alpha}\,\frac{\xi(s)e^{-x\xi(s)}}{\psi(-i\,\xi(s))+\frac1{\alpha}\,\xi(s)^\alpha m'(\log\xi(s))}=\frac1{\alpha}\,\frac{\xi(s)e^{-x\xi(s)}}{s+f(s)},
\end{align*}
where $f(s)=\frac1{\alpha}\,\xi(s)^\alpha m'(\log\xi(s))$. Hence we have shown \eqref{LTsemifrac} and the denominator is strictly positive, since $s+f(s)=\alpha^{-1}\xi(s)\,s'(\xi(s))>0$.
Note that due to the above approach $f$ and $g$ do only depend on the parameters $c$, $\alpha$ and $\theta$ of the semistable distribution.
\end{proof}
\begin{lemma}\label{4.3}
Let $f(s)=\frac1{\alpha}\,\xi(s)^\alpha m'(\log\xi(s))$ as above. Then we can write $\frac{-f(s)}{s+f(s)}\,\xi(s)=s^{1/\alpha}\gamma(\log s)$ for some $\log(c)$-periodic and smooth function $\gamma$.
\end{lemma}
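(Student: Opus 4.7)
The approach I would take is to collapse the entire left-hand side into an expression involving only $g$, $m$, and $m'$, and then read off the required function. I start from the two identities derived in the proof of Theorem 3.1: $f(s) = \alpha^{-1}\xi(s)^{\alpha} m'(\log \xi(s))$ and $s + f(s) = \alpha^{-1}\xi(s)\,s'(\xi(s))$. Differentiating the factorization $s(k) = k^{\alpha} m(\log k)$ from \eqref{logcharsemi-iC} yields $s'(k) = k^{\alpha-1}\bigl(\alpha\, m(\log k) + m'(\log k)\bigr)$. Substituting these three facts into the quotient and cancelling $\xi(s)^{\alpha-1}$ and $\alpha^{-1}$, I expect to reach the clean formula
\[
\frac{-f(s)}{s+f(s)}\,\xi(s) \;=\; \frac{-\,\xi(s)\, m'(\log \xi(s))}{\alpha\, m(\log \xi(s)) + m'(\log \xi(s))}.
\]
Lemma \ref{4.2} then lets me pull out $s^{1/\alpha}$ via $\xi(s) = s^{1/\alpha}g(\log s)$, motivating the definition
\[
\gamma(u) \;:=\; g(u)\cdot \frac{-\,m'(\log \xi(e^{u}))}{\alpha\, m(\log \xi(e^{u})) + m'(\log \xi(e^{u}))}.
\]

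The next step is $\log(c)$-periodicity. The key fact here is the scaling identity $c^{1/\alpha}\xi(s) = \xi(cs)$ established inside the proof of Lemma \ref{4.2}; taking logarithms gives $\log \xi(cs) = \log \xi(s) + \log(c^{1/\alpha})$. Since $m$ — and hence $m'$ — is $\log(c^{1/\alpha})$-periodic by construction in \eqref{logcharsemi-iC}, the shift $s \mapsto cs$ leaves the entire $m$-dependent fraction invariant. Combined with the $\log(c)$-periodicity of $g$ from Lemma \ref{4.2}, this gives $\gamma(u + \log c) = \gamma(u)$ exactly as required. The match between the periods $\log c$ and $\log(c^{1/\alpha})$ is forced precisely by the $c^{1/\alpha}$ appearing in the scaling of $\xi$, which is the subtlest bookkeeping point.

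For smoothness, $g$ is continuously differentiable by Lemma \ref{4.2}, and $m$ is in fact real analytic: the Fourier coefficients $\omega_n = -c_n\,\Gamma(in\widetilde{c} - \alpha + 1)$ decay exponentially in $|n|$ because $|\Gamma(\sigma + it)|$ decays like $e^{-\pi|t|/2}$ on vertical lines, so the series for $m$ and all its derivatives converge absolutely. The only place where trouble could arise is a zero of the denominator, but a comparison with $s'(k) = k^{\alpha-1}\bigl(\alpha\, m(\log k) + m'(\log k)\bigr)$ shows the denominator equals $\xi(s)^{1-\alpha}s'(\xi(s))$, and $s'(k) > 0$ for every $k > 0$ was already proven within Lemma \ref{4.1}. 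Hence $\gamma$ is continuously differentiable (actually $C^\infty$), and therefore smooth in the sense of Section \ref{sec:3}, so the Fourier series in \eqref{gdef} is justified.

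I do not foresee a real obstacle beyond spotting the algebraic cancellation; once $s'(k)$ is written as $k^{\alpha-1}(\alpha m + m')$, the whole expression collapses mechanically. The most delicate moment is the periodicity verification, because one must track two different periods — $\log c$ for $g$ and $\log(c^{1/\alpha})$ for $m$ — and confirm that the scaling $\xi(cs) = c^{1/\alpha}\xi(s)$ reconciles them exactly; no further information about $\theta$ is needed.
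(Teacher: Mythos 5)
Your proof is correct and follows essentially the same route as the paper: substitute the explicit formulas for $f$ and $\xi$, cancel, and deduce periodicity from the $\log(c)$-periodicity of $g$, the $\log(c^{1/\alpha})$-periodicity of $m$, and the scaling identity $\xi(cs)=c^{1/\alpha}\xi(s)$ (your denominator $\alpha\,m+m'$ agrees with the paper's $\alpha+g^\alpha m'$ via the identity $g(\log s)^\alpha m(\log\xi(s))=1$). Your added justifications of smoothness and of the non-vanishing of the denominator via $s'(\xi(s))>0$ are correct details that the paper leaves implicit.
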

\begin{proof}
Write
$$\frac{-f(s)}{s+f(s)}\,\xi(s)=\frac{-g(\log s)^\alpha m'(\log\xi(s))}{\alpha+g(\log s)^\alpha m'(\log\xi(s))}\,s^{1/\alpha} g(\log s)=s^{1/\alpha}\gamma(\log s).$$
Since $g$ is $\log(c)$-periodic, $m$ is $\log(c^{1/\alpha})$-periodic and $\xi(cs)=c^{1/\alpha}\xi(s)$, the assertion follows easily.
\end{proof}
\begin{remark}
Note that in the stable case we have $\psi(k)=(ik)^\alpha$ and thus $m\equiv1$ in \eqref{logcharsemi-iC} and $g\equiv1$ in Lemma \ref{4.2} are constant. Thus $f\equiv0$ in the above proof of Theorem \ref{3.1} and \eqref{LTsemifrac} coincides with \eqref{LTsol}.
\end{remark}

\bibliographystyle{plain}

\end{document}